\theoremstyle{plain}\newtheorem{Theorem}{Theorem}[section]
\theoremstyle{plain}
\theoremstyle{plain}\newtheorem{Corollary}[Theorem]{Corollary}
\theoremstyle{plain}\newtheorem{Lemma}[Theorem]{Lemma}
\theoremstyle{plain}\newtheorem{Proposition}[Theorem]{Proposition}
\theoremstyle{definition}\newtheorem{Definition}[Theorem]{Definition}
\theoremstyle{definition}
\theoremstyle{definition}
\theoremstyle{definition}\newtheorem{Remark}[Theorem]{Remark}
\theoremstyle{definition}
\theoremstyle{plain}\newtheorem{Statement}[Theorem]{}
\def\CC{{\mathcal{C}}}    
\def\CE{{\mathcal{E}}}
\def\CL{{\mathcal{L}}}  \def\bCL{{\mathbf{L}}}
\def\N{{\mathbb N}}
\def\Z{{\mathbb Z}}
\def\One{{\mathds{1}}}
\def\Aut{\mathrm{Aut}}   \def\bAut{\mathbf{Aut}}
\def\Hom{\mathrm{Hom}}           
  \def\bHom{\mathbf{Hom}}
\def\Id{\mathrm{Id}}
\def\Map{\mathrm{Map}}
\def\op{\mathrm{op}}
\def\sym{\mathfrak{S}} %symmetric group
\title{The operad  of Latin hypercubes } % cohomology of Latin hypercubes?
\author{Markus Linckelmann} 
\date{\today}
\address{Markus Linckelmann \\
School of Science \& Technology \\
Department of Mathematics \\
City, University of London \\
Northampton Square \\
London EC1V 0HB \\
United Kingdom}
\email{markus.linckelmann.1@city.ac.uk}
\subjclass[2010]{05B15, 18M60, 18M05}
\keywords{Latin hypercube, operad}
\begin{document}

\begin{abstract}
We show that the sets of $d$-dimensional Latin hypercubes over a non-empty
set $X$, with $d$ running over the positive integers, determine an operad which 
is isomorphic to a sub-operad of the endomorphism operad of $X$. We  generalise
this to categories with finite products, and then further to internal versions for certain 
Cartesian closed  monoidal categories with pullbacks. 
\end{abstract}

\maketitle

%%% 1 %%%%%%%%%%%%%%%%%%%%%%%%%%%%%%%%%%%%%
%%%%%%%%%%%%%%%%%%%%%%%%%%%%%%%%%%%%
\section{Introduction} \label{introSection} 

There are several variants of the definition of Latin hypercubes in the literature - see the
discussions and references in \cite[\S 4]{BCCS} and \cite[Section 1]{McKayWan}.
In order to define the version we will be considering here, we fix the
following notation and conventions.
Categories are assumed to be essentially small.
Let $X$ be an object in a category $\CC$ with finite products, and let $d$ be a positive 
integer. The product $X^d$ of $d$ copies of $X$ in $\CC$
is equipped with $d$ canonical projections $\pi^d_i : X^d\to X$,
characterised by  the universal 
property that for any object $Y$ and morphisms $\eta_i : Y\to X$,
with $1\leq i\leq d$, there is a unique morphism $\eta : Y\to X^d$ satisfying
$\eta_i = \pi^d_i\circ \eta$ for $1 \leq i\leq d$. In that case we write 
$\eta=$ $(\eta_i)_{1\leq i\leq d}$.
Applied to $X^{d+1}$ instead of $Y$
and any subset  of $d$ of the $d+1$ canonical projections  $\pi^{d+1}_j$ yields
for every $s$ such that $1\leq s \leq d+1$ a unique morphism 
$\tau^{d+1}_s : X^{d+1} \to X^d$
satisfying 
$\pi^{d+1}_i = \pi^d_i\circ \tau^{d+1}_s$
for $1\leq i\leq s-1$, and
$\pi^{d+1}_i = \pi^d_{i-1}\circ\tau^{d+1}_s$
for $s+1 \leq i\leq d+1$. With the notation above, this is equivalent to
$$\tau^{d+1}_s= (\pi^{d+1}_1,..,\pi^{d+1}_{s-1},
\pi^{d+1}_{s+1},..,\pi^{d+1}_{d+1}). $$
If $X$ is a non-empty set, then $X^{d+1}$ is the Cartesian product of $d+1$ copies of $X$, 
and we have 
$$\tau^{d+1}_s(x_1,x_2,..,x_{d+1})= (x_1,..,x_{s-1}, x_{s+1},..,x_{d+1}),$$ 
where $x_i\in X$ for $1\leq i\leq d+1$. That is,  $\tau^{d+1}_s$
is the projection from $X^{d+1}$ to $X^d$ which discards the coordinate $s$ in $X^{d+1}$.

\begin{Definition} \label{hypercubeDef}
Let $\CC$ be a category with finite products,  let $X$ be an object in $\CC$, and let $d$ be 
a positive integer.
A {\em Latin hypercube of dimension $d$ over $X$} is a morphism $\lambda : L \to X^{d+1}$
in $\CC$ such that $\tau^{d+1}_s\circ \lambda : L \to X^d$ is an isomorphism
in $\CC$, for $1\leq s\leq d+1$.
\end{Definition} 

The morphism $\lambda$ in this Definition is necessarily a monomorphism. 
Two Latin hypercubes $\lambda : L\to X^{d+1}$ and $\lambda' : L'\to X^{d+1}$ 
are called {\em isomorphic} if there is an isomorphism $\alpha : L\to L'$ such that 
$\lambda=\lambda'\circ\alpha$. In that case $\alpha$ is unique since $\lambda'$ 
is a monomorphism. 

\medskip
A Latin hypercube of  dimension $d$ over a non-empty set $X$ is uniquely 
isomorphic to a Latin hypercube given by
 a  subset $L$ of $X^{d+1}$ with the property that for any choice 
of $d$ of the $d+1$ coordinates of an element in $X^{d+1}$ there is a unique element 
in the remaining coordinate
such that the resulting element belongs to $L$. In particular, we can 
identify a Latin hypercube of dimension $d$ over a non-empty set uniquely, up to
unique isomorphism, as the graph  of the 
map $f : X^d\to X$ satisfying $(x_1,x_2,..,x_d, f(x_1,x_2,..,x_d))\in L$ for all
$(x_1,x_2,..,x_d)\in X^d$. In this way 
Latin hypercubes of dimension $d$ can be identified with a subset, denoted 
$\CL(X^d,X)$, of  the set $\Map(X^d,X)$ of all maps from $X^d$ to $X$. 
Any map in $\CL(X^d,X)$ is  clearly surjective. The following result
 shows that the subsets $\CL(X^d,X)$ of $\Map(X^d,X)$ with $d\in \N$
determine an operad. We refer to
\cite{May72} or \cite[Part II, \S 1.2]{MarShnSta}  for basic terminology on operads.

\begin{Theorem} \label{thm1}
Let $X$ be a non-empty set. There is a sub-operad $\CL$ of the endomorphism
operad $\CE$ of $X$ such that $\CL(d)=$ $\CL(X^d,X)$, for all  $d\in \N$.
\end{Theorem}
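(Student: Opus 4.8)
The plan is to verify directly that the subsets $\CL(d)=\CL(X^d,X)$ are closed under the structure maps of the endomorphism operad $\CE$, where $\CE(d)=\Map(X^d,X)$, the operadic composition $f\circ(g_1,\dots,g_d)$ is given by substituting the $g_i$ into disjoint consecutive blocks of variables, the symmetric group $\sym_d$ acts by permuting the $d$ arguments, and the unit is $\Id_X\in\CE(1)$. Since $\CE$ is already an operad, it suffices to check that these three structure maps send elements of the subsets $\CL(d)$ back into the appropriate $\CL(\,\cdot\,)$.

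First I would record the concrete description of $\CL(d)$ that comes out of Definition \ref{hypercubeDef}. Applying the Latin condition to the projection $\tau^{d+1}_s$ for $1\leq s\leq d$ (the case $s=d+1$ being automatic for a graph) shows that a map $f\in\Map(X^d,X)$ lies in $\CL(X^d,X)$ if and only if $f$ is bijective in each variable separately: for every $i$ with $1\leq i\leq d$ and every fixed choice of the remaining $d-1$ arguments, the map $X\to X$ sending the $i$-th argument to the corresponding value of $f$ is a bijection.

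With this reformulation the unit and the symmetric action are immediate. The identity $\Id_X$ is a bijection, so $\Id_X\in\CL(1)$; and permuting the arguments of a map that is bijective in each variable again yields a map bijective in each variable, so each $\CL(d)$ is stable under $\sym_d$.

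The substantive step is closure under operadic composition, which I would establish by a composition-of-bijections argument. Given $f\in\CL(d)$ and $g_i\in\CL(n_i)$ for $1\leq i\leq d$, set $n=n_1+\cdots+n_d$ and $h=f\circ(g_1,\dots,g_d)\in\CE(n)$. Fix a coordinate $j$; it belongs to the block of variables fed to exactly one $g_{i_0}$. Freezing all coordinates other than $j$, the map $x_j\mapsto g_{i_0}(\dots)$ is a bijection of $X$ because $g_{i_0}\in\CL(n_{i_0})$, while the outputs of the remaining $g_{i'}$ stay constant; feeding the resulting value into $f$, which is bijective in its $i_0$-th argument because $f\in\CL(d)$, shows that $x_j\mapsto h$ is a composite of two bijections of $X$ and hence a bijection. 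As $j$ was arbitrary, $h$ is bijective in each variable, so $h\in\CL(n)$. I expect the only delicate point to be the bookkeeping that matches the coordinate $j$ to a single argument slot of $g_{i_0}$ and keeps this aligned with the $i_0$-th argument slot of $f$; once the indexing is set up correctly, the conclusion is purely formal.
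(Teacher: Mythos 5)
Your proof is correct, and its mathematical core---the characterization of $\CL(X^d,X)$ as those maps that are bijective in each variable separately, followed by tracing this property through composition---is the same as in the paper. The organizational difference is that you verify closure for the full May-style composition $f\circ(g_1,\dots,g_d)$, whereas the paper works with the partial compositions $-\circ_i-$ and proves closure in its Lemma~\ref{Lem-closed}. In your setup every coordinate $j$ lies in the block of exactly one $g_{i_0}$, so the verification collapses to a single uniform statement: with all other coordinates frozen, $x_j\mapsto h(x_1,\dots,x_n)$ is a composite of two bijections of $X$. The paper's partial-composition route instead forces a two-case analysis, according to whether the distinguished coordinate $x_s$ lies outside the block of arguments fed to $g$ (where only the bijectivity of $f$ is invoked, $g(\dots)$ being constant) or inside it (where $f$ first pins down the unique value $y$ and then $g$ pins down $x_s$---exactly your composite-of-bijections step, phrased as unique solvability). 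Your uniform treatment subsumes both cases, since the outside case is recovered by taking $g_{i'}=\Id_X$ in the other slots; conversely, since $\Id_X\in\CL(1)$ and $f\circ_i g = f\circ(\Id_X,\dots,\Id_X,g,\Id_X,\dots,\Id_X)$, your closure statement implies the paper's, so the two verifications are formally interchangeable. What the paper's choice buys is visible later: the partial compositions, combined with the symmetry reduction of Remark~\ref{rem1} to $i=d$ and $s\in\{1,d\}$, are what the paper rewrites element-free in Theorem~\ref{thm2} for categories with finite products, where one cannot freeze coordinates; your argument, as written, is tied to elementwise reasoning, though it could be recast similarly.
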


This is not the most general version in which this result can be stated.
Theorem \ref{thm1}  admits a generalisation to  categories with finite products, 
which we will describe in Theorem \ref{thm2}, and 
 a further generalisation, in Theorem \ref{thm4},  to certain Cartesian closed 
monoidal categories  in which  $\CL$ can  be identified with a 
suboperad of an internal endomorphism operad.
We have chosen to state and prove this result first in the context of non-empty 
sets in order to not distract  from the elementary nature  of the proof.
Operads were first introduced for topological spaces, and Theorem \ref{thm1} 
holds verbatim for non-empty  compactly generated topological spaces (this is
a special case of Theorem \ref{thm4}; see Remark \ref{Rem-thm4}).
%See Section \ref{furtherSection} for some speculative remarks.

\begin{Remark}
As  pointed out in \cite{McKayWan},  if $d=1$, then a Latin hypercube over  a
non-empty set $X$  is a subset of $X^2$ of the form $\{(x,\sigma(x))\}_{x\in X}$ 
for some permutation  $\sigma$ of $X$, and hence Latin hypercubes of dimension 
$1$ over $X$ correspond  to the elements of the symmetric group $\sym_X$ of 
permutations of $X$, so they form themselves a group. This group structure is 
encoded as the structural map  $-\circ_1 -$ on $\CL(1)$ of the operad $\CL$. 
\end{Remark}

\begin{Remark} \label{AutLatinRemark}
Let $X$ be an object in a category $\CC$ with finite products and let $d$ be a 
positive integer.  The group $\Aut_\CC(X)\wr \sym_{d+1}$ acts on $X^{d+1}$, hence 
on the class of Latin hypercubes of  dimension $d$ over $X$, with the base group of 
$d+1$ copies of $\Aut_\CC(X)$ acting by composing $\lambda$ with a $(d+1)$-tuple 
of automorphisms of $X^{d+1}$ and $\sym_{d+1}$ acting on $X^{d+1}$ by permuting 
the $d+1$ canonical  projections  $\pi^{d+1}_i : X^{d+1}\to X$. This action induces an 
action of $\Aut_\CC(X)\wr \sym_{d+1}$ on the isomorphism classes of Latin 
hypercubes of dimension $d$ over $X$, which for $\CC$ the category of sets is
the standard notion of paratopism.
\end{Remark}

\begin{Remark}
Let $X$ be an object in a category $\CC$ with finite products, and let $d$ be a
positive integer. The canonical projections $\pi^{d}_i : X^{d}\to$ $X$ are
split surjective, with section the diagonal morphism $\delta : X\to$ $X^{d}$
defined as the unique morphism such that $\pi^{d}_i\circ\delta=$ $\Id_X$, for
$1\leq i\leq d$.  Let $\lambda : L\to X^{d+1}$ be a Latin hypercube.
The $d+1$ components $\lambda_i=$ $\pi^{d+1}_i\circ\lambda : L\to X$ 
of $\lambda$  are split  epimorphisms. 
Indeed, if we choose $s\neq i$, with $1\leq i,s\leq d+1$,
then $\pi^{d+1}_i$ factors through $\tau^{d+1}_s$; more precisely, 
$\pi^{d+1}_i\circ\lambda = \pi^{d}_j\circ \tau^{d+1}_s\circ \lambda$
where $j=i$ if $i<s$, and $j=i-1$ if $i>s$. Since $\tau^{d+1}_s\circ\lambda$
is an isomorphism and $\pi^d_j$ a split epimorphism, it follows that their
composition is a split epimorphism, and hence so is $\pi^{d+1}_i\circ\lambda$.
\end{Remark}

% 2 %%%%%%%%%%%%%%%%%%%%%%%%%%%%%%%%%%%%%%%%%%%%%
\section{Proof of Theorem \ref{thm1}} \label{thm1proofSection}

The endomorphism operad $\CE$ of a non-empty set $X$ consists
of the sets $\CE(n) = \Map(X^n,X)$  for any positive integer $n$, 
together with strutural maps
$$-\circ_i - : \Map(X^n,X) \times \Map(X^m,X) \to \Map(X^{n+m-1},X)$$
given by
$$(f \circ_i g)(x_1,x_2,..., x_{n+m-1}) =
f(x_1,..,x_{i-1}, g(x_i,..,x_{i+m-1}), x_{i+m},..,x_{n+m-1})$$
for all positive integers $n$, $m$, $i$, such that $1\leq i\leq n$,  all
$x_1,x_2,..,x_{n+m-1}\in X$, and all maps $f\in \Map(X^n,X)$ and $g\in \Map(X^m,X)$. 
The sets $\CE(n)=\Map(X^n,X)$ are equipped with
the action of $\sym_n$ on the $n$ coordinates of $X^n$, and the identity map
$\Id_X\in$ $\CE(1)=\Map(X,X)$ is the unit element of this operad.
For the associativity properties of the maps $-\circ_i -$ and their compatibility with 
the symmetric group actions on the sets $\Map(X^n,X)$, see for instance 
\cite[Definition 1.2]{May72} or \cite[Part II,\S 1.2]{MarShnSta}.
The main step for the proof of Theorem \ref{thm1} is the following Lemma.

\begin{Lemma}  \label{Lem-closed}
Let $X$ be a non-empty set, and 
let $d$, $e$, $i$ be positive integers such that $1\leq i\leq d$. 
Let $f\in\CL(X^d,X)$ and $g\in\CL(X^e,X)$. 
Then $f\circ_i g\in$ $\CL(X^{d+e-1},X)$.
\end{Lemma}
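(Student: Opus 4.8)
The plan is to verify directly from the defining property of Latin hypercubes that $f\circ_i g$ is Latin, using the characterisation given in the proof of Theorem \ref{thm1}: a map $h\in\Map(X^n,X)$ lies in $\CL(n)$ precisely when, for every target value $c\in X$ and every choice of $n-1$ of the $n$ input coordinates, the remaining coordinate is uniquely determined by the equation $h(x_1,\dots,x_n)=c$. Writing $h=f\circ_i g$, recall that
$$h(x_1,\dots,x_{d+e-1}) = f(x_1,\dots,x_{i-1}, g(x_i,\dots,x_{i+e-1}), x_{i+e},\dots,x_{d+e-1}).$$
So I must show: fix $c\in X$, fix all but one of the coordinates $x_1,\dots,x_{d+e-1}$, and show the equation $h(\cdots)=c$ has a unique solution in the free coordinate. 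The natural device is to set $y = g(x_i,\dots,x_{i+e-1})$, so that $f$ is being evaluated at the $d$-tuple $(x_1,\dots,x_{i-1},y,x_{i+e},\dots,x_{d+e-1})$.

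I would split into two cases according to where the free coordinate sits. \emph{Case 1: the free coordinate is some $x_j$ with $j\notin\{i,\dots,i+e-1\}$}, i.e.\ it is one of the $d-1$ ``outer'' arguments fed directly to $f$. Then the inner value $y$ is a fixed element of $X$ (all of $g$'s inputs are fixed), so the equation $h=c$ becomes $f(\dots,y,\dots)=c$ with $y$ and all but the one outer coordinate fixed; since $f\in\CL(d)$, this has a unique solution in $x_j$. \emph{Case 2: the free coordinate is some $x_j$ with $i\leq j\leq i+e-1$}, i.e.\ it is one of the inner arguments fed to $g$. Here the key point is that the $d-1$ outer arguments of $f$ are all fixed, and $f\in\CL(d)$ forces the $i$-th slot of $f$ to take a \emph{unique} value $y_0$ satisfying $f(x_1,\dots,y_0,\dots)=c$. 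The problem thus reduces to solving $g(x_i,\dots,x_{i+e-1})=y_0$ for the single free inner coordinate with the other $e-1$ inner coordinates fixed; since $g\in\CL(e)$, this too has a unique solution.

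Thus in both cases the solution exists and is unique, which is exactly the condition for $h\in\CL(X^{d+e-1},X)$. The crucial structural observation making Case 2 work is that \emph{composing Latin maps chains their ``solvability'': the outer Latin map $f$ pins down a unique required output $y_0$ of the inner map, and then the inner Latin map $g$ pins down the free coordinate.} I expect the main obstacle to be purely bookkeeping rather than conceptual: keeping the index ranges $\{1,\dots,i-1\}$, $\{i,\dots,i+e-1\}$, $\{i+e,\dots,d+e-1\}$ straight and correctly identifying, in Case 2, that fixing all inputs of $h$ except one inner coordinate is the same as fixing all outer inputs of $f$ together with all but one input of $g$. Once the substitution $y=g(\cdots)$ is in place, each case is a one-line appeal to the defining unique-solvability property of a Latin map, so no genuine difficulty remains beyond careful indexing.
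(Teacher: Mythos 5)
Your proposal is correct and follows essentially the same route as the paper's proof: the same two-case split according to whether the free coordinate is an outer argument of $f$ or an inner argument of $g$, the same substitution $y=g(x_i,\dots,x_{i+e-1})$, and in the inner case the same chaining argument where $f$ first pins down the unique required value $y_0$ and then $g$ pins down the free coordinate. No gaps; the bookkeeping you anticipate is exactly what the paper's write-up carries out.
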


\begin{proof}
In order to show that
$f\circ_i g$ belongs to $\CL(X^{d+e-1},X)$ we need to show that for any $c\in X$
and an arbitrary choice of $d+e-2$ of the $d+e-1$ elements $x_1,x_2,..,x_{d+e-1}\in X$
the remaining of these elements is uniquely determined by the equation
\begin{equation}  \label{eq-1}
f(x_1,..,x_{i-1}, g(x_i,..,x_{i+e-1}), x_{i+e},..,x_{d+e-1}) = c.
\end{equation}
Let $s$ be an integer such that $1\leq s\leq d+e-1$. Fix elements
$x_1,x_2,..,x_{s-1}, x_{s+1},..,x_{d+e-1}, c \in$ $X$. 

Consider first that case where $s\leq i-1$ or $s\geq i+e$.
Then, setting $y=g(x_i,..,x_{i+e-1})$, the
Equation \ref{eq-1} becomes
\begin{equation} \label{eq-2}
f(x_1,..,x_{i-1},y,x_{i+e},..,x_{d+e-1}) = c.
\end{equation}
All entries  but the entry $x_s$ in this equation are fixed. Since $f\in\CL(X^d,X)$ it follows
that there is a unique choice for $x_s\in X$ such that Equation \ref{eq-2} holds, and
hence a unique choice for $x_s\in X$ such that Equation \ref{eq-1} holds.

Consider the remaining case where $i\leq s \leq i+e-1$. Then in particular the
elements $x_1,..,x_{i-1}, x_{i+e},..,x_{d+e-1}$ are fixed in $X$.
Since $f\in\CL(X^d,X)$ it follows that there is a unique $y\in X$ such that
Equation \ref{eq-2} holds.  Thus for Equation \ref{eq-1} to hold we must have
\begin{equation} \label{eq-3}
g(x_i,..,x_{i+e-1}) = y.
\end{equation}
In this equation all but $x_s$ have been chosen. Since $g\in\CL(X^e,X)$ it follows
that there is a unique choice $x_s\in X$ such that Equation \ref{eq-3} holds.
In all cases there is a unique choice of $x_s$ such that Equation \ref{eq-1} holds.
This shows that $f\circ_i g$ belongs to $\CL(X^{d+e-1},X)$ and completes the proof.
\end{proof}

\begin{proof}[{Proof of Theorem \ref{thm1}}]
Let $d\in \N$.  A map $f\in\CE(d)=$ $\Map(X^d,X)$ 
belongs to $\CL(d)=\CL(X^d,X)$ if and only if  the set 
$$L = \{(x_1,x_2,..,x_d, f(x_1,x_2,..,x_d))\ |\ x_1,x_2,..,x_d \in X\}$$
is a Latin hypercube in $X^{d+1}$. Equivalently, $f$ belongs to $\CL(d)$ if and only
for every  $c\in X$ and an arbitrary  choice of $d-1$ of the $d$ 
entries $x_1,..,x_d\in X$ the remaining entry is uniquely determined by the equation
$f(x_1,x_2,..,x_d)=c$. 
The action of $\sym_d$ on $\Map(X^d,X)$
by permuting the $d$ coordinates  of $X^d$ clearly preserves the subset $\CL(d)$, 
and $\Id_X$ belongs to $\CL(1)$.
In order to prove Theorem \ref{thm1} it
remains to show that the sets $\CL(d)$ are closed under the operations $-\circ_i-$.
This is done in Lemma \ref{Lem-closed} above, and
this concludes the proof.
\end{proof}

\begin{Remark} \label{rem1} 
We could have very slightly simplified the proof of Lemma \ref{Lem-closed}
by observing that thanks to the symmetric group actions on the coordinates 
of  Latin hypercubes  (cf. Remark \ref{AutLatinRemark}) it would have been 
sufficient in the proof of Lemma \ref{Lem-closed}  to consider the map 
$f\circ_{d} g$ and a single $s$ in the  distinction into the  two cases  for $s$.  
That is,  it would have been sufficient in the last part of the proof of 
Lemma \ref{Lem-closed}  to consider the cases where  either $1=s\leq d-1$ or 
$s=d$.  We will make use of this observation in the proof of the more general 
Theorem \ref{thm2} below.
\end{Remark}

\begin{Remark} \label{rem2}
The proof of Theorem \ref{thm1}, as written, involves choices of  elements in the 
set $X$.  In Section \ref{monoidalSection} below we will rewrite this proof in such 
a way that it extends to categories with finite products. 
 \end{Remark}

%% 3 %%%%%%%%%%%%%%%%%%%%%%%%%%%%%%%%%%%%%%%%%%%
\section{On Latin hypercubes in  Cartesian  monoidal categories} \label{monoidalSection} 

In this section we extend Theorem \ref{thm1} to categories with finite products.
Given an object $X$ in a category $\CC$ with finite products, 
a positive integer $n$, and a morphism $\lambda : L \to X^n$ in $\CC$, we denote
as at the beginning  by $\lambda_i=$ $\pi^n_i\circ\lambda$ the composition of 
$\lambda$ with the $i$-th canonical projection $\pi^n_i : X^n\to X$, 
where $1\leq i\leq n$. The morphism $\lambda$ is
uniquely determined by the $\lambda_i$, and we will write abusively $\lambda=$
$(\lambda_i)_{1\leq i\leq n}$ whenever convenient. 
If $\sigma\in$ $\sym_n$, then $\sigma$ induces an automorphism $\hat\sigma$ 
on $X^n$ given by $\hat\sigma_i=\pi^n_{\sigma^{-1}(i)}$; that is, $\hat\sigma$ 
permutes the coordinates of $X^n$. This yields a group
homomorphism $\sym_n \to \Aut_\CC(X^n)$.  We write ${^\sigma{\lambda}}=$ 
$\hat\sigma\circ\lambda$. 
 We extend the earlier notation 
$\CL_\CC(X^d,X)$ in the obvious way.

\begin{Definition} \label{CL-DEF}
Let $\CC$ be a category with finite products.
Let $d$ be a positive integer.
We denote by $\CL_\CC(X^d,X)$ the subset of $\Hom_\CC(X^d,X)$
consisting of all morphisms $f : X^d\to X$ such that
the morphism $(\Id_{X^d}, f) : X^d \to X^d\times X = X^{d+1}$ is a Latin hypercube.
\end{Definition}

We first identify canonical representatives in isomorphism classes of Latin hypercubes.

\begin{Proposition} \label{monoidalProp1}
Let $\CC$ be a category with finite products, let $X$ be an object in $\CC$, and
let $d$, $s$ be  positive integers such that $1\leq s\leq d+1$. 
Let $\lambda: L\to X^{d+1}$ be a Latin hypercube.
 Then there is a unique Latin hypercube $\iota : X^d \to X^{d+1}$ such that
$\tau^{d+1}_{s}\circ \iota = \Id_{X^d}$ and such that 
$\iota\circ\alpha=\lambda$ for some isomorphism $\alpha : L \to X^d$.
In that case we have  $\alpha = $ $\tau^{d+1}_{s}\circ \lambda :  L \to X^d$. 
\end{Proposition}

\begin{proof}
By the definition of Latin hypercubes, the morphism 
$\alpha = \tau^{d+1}_{s}\circ \lambda : L\to$ $X^d$ is an isomorphism. 
Then setting $\iota = \lambda\circ \alpha^{-1}$ implies immediately that $\alpha$
determines an isomorphism between the Latin hypercubes $\lambda : L\to X^{d+1}$
and $\iota : X^d\to X^{d+1}$. We need to show that $\alpha$ and $\iota$  are
unique subject to these properties. Let $\iota' : X^d\to X^{d+1}$ a Latin hypercube
and $\alpha' : L\to X^d$ an isomorphism such that $\tau^{d+1}_{s}\circ\iota'=$
$\Id_{X^d}$ and such that $\iota'\circ\alpha'= \lambda$. Composing this
equality with $\tau^{d+1}_{s}$ yields 
$$\alpha' = \Id_{X^d}\circ\alpha' = \tau^{d+1}_{s}\circ\iota'\circ \alpha'
= \tau^{d+1}_{s}\circ \lambda = \alpha$$
This implies $\iota'=\lambda\circ\alpha^{-1}=\iota$, whence the uniqueness of $\iota$ 
and $\alpha$ as stated. The result follows.
\end{proof}

Applied with $s=d+1$, Proposition \ref{monoidalProp1} implies that  -  as 
earlier in the category of sets - the  morphisms in $\CL_\CC(X^d,X)$
parametrise the isomorphism classes of $d$-dimensional Latin hypercubes over $X$.

\begin{Corollary} \label{monoidalCor1}
Let $\CC$ be a category with finite products, let $X$ be an object in $\CC$, and
let $d$ be a positive integer. Any Latin hypercube of dimension $d$ over $X$ is
uniquely isomorphic to  a Latin hypercube of the form $\iota : X^d\to X^{d+1}$ 
such that $\iota$ is a section of the morphism $\tau^{d+1}_{d+1}$ discarding the 
coordinate $d+1$. Any such morphism  $\iota$ is then uniquely determined by
its last component $f = \iota_{d+1} = \pi^{d+1}_{d+1}\circ\iota  : X^d\to X$.
In particular, the set $\CL_\CC(X^d,X)$ 
parametrises the  isomorphism classes of $d$-dimensional Latin hypercubes over $X$.
\end{Corollary}

We show now that the sets $\CL_\CC(X^d,X)$
form an operad, together with the structural maps
$-\circ_i -$ defined as follows.
Let $f\in$ $\CL_\CC(X^d,X)$ and $g\in$ $\CL_\CC(X^e,X)$, where
$d$, $e$ are positive integers. For $1\leq i \leq d$, the structural map
$$-\circ_i - : \Hom_\CC(X^d,X) \times \Hom_\CC(X^e,X) \to \Hom_\CC(X^{d+e-1},X)$$
sends $(f, g)$ to the morphism 
$$f \circ (\Id_{X^{i-1}}\times g \times \Id_{X^{d-i}})$$
where we identify $X^{i-1}\times X^e\times X^{d-i}=X^{d+e-1}$ for the domain
of  this morphism and where we identify $X^{i-1}\times X\times X^{d-i}=$ $X^d$
for the codomain of $ \Id_{X^{i-1}}\times g \times \Id_{X^{d-i}}$.
One checks that if $X$ is a set, this coincides with the earlier definition of $f\circ_i g$.

\begin{Theorem} \label{thm2}
Let $X$ be an object in a category $\CC$ with finite  products. 
There is a sub-operad $\CL$ of the endomorphism set
operad $\CE$ of $X$ such that $\CL(d)=$ $\CL_\CC(X^d,X)$, for all  $d\in \N$.
In particular, for any positive integers $d$, $e$, $i$ such that $1\leq i\leq d$,
and any morphisms $f\in$ $\CL_\CC(X^d,X)$ and $g\in$ $\CL_\CC(X^e,X)$ we have
$f\circ_i g \in \CL_\CC(X^{d+e-1},X)$. 
\end{Theorem}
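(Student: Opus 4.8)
The plan is to reduce Theorem~\ref{thm2} to the set-theoretic Lemma~\ref{Lem-closed} by a standard device: replacing concrete elements with generalised elements, i.e.\ morphisms into $X$ from an arbitrary test object $T$. The key observation is that the defining property of a Latin hypercube $f\in\CL_\CC(X^d,X)$ can be phrased representably. Namely, $f$ lies in $\CL_\CC(X^d,X)$ precisely when the morphism $(\Id_{X^d},f):X^d\to X^{d+1}$, discarded in any single coordinate $s$ via $\tau^{d+1}_s$, yields an isomorphism $X^d\to X^d$. By the Yoneda lemma, isomorphy of these morphisms is detected on $\Hom_\CC(T,-)$ for all $T$, and the product structure gives $\Hom_\CC(T,X^n)\cong\Hom_\CC(T,X)^n$ naturally. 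Thus each Latin hypercube $f$ induces, for every $T$, an honest set-map $f_T:\Hom_\CC(T,X)^d\to\Hom_\CC(T,X)$, and the condition ``$\tau^{d+1}_s\circ\iota$ is an isomorphism for all $s$'' translates into: for every $T$ and every $c\in\Hom_\CC(T,X)$, fixing any $d-1$ of the $d$ arguments of $f_T$, the remaining argument is uniquely determined by $f_T(\cdots)=c$. In other words, $f_T\in\CL(\Hom_\CC(T,X)^d,\Hom_\CC(T,X))$ in the set-theoretic sense, for every $T$.

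First I would make this translation precise: I would check that for a morphism $f:X^d\to X$, membership in $\CL_\CC(X^d,X)$ is equivalent to $f_T$ being a Latin hypercube over the set $\Hom_\CC(T,X)$ for all test objects $T$. The forward direction is immediate because applying the representable functor $\Hom_\CC(T,-)$ to the isomorphisms $\tau^{d+1}_s\circ\iota$ produces bijections; the key point is that $\Hom_\CC(T,-)$ preserves products, so $\tau^{d+1}_s$ becomes the corresponding coordinate-discarding map on $\Hom_\CC(T,X)^{d+1}$. For the converse, I would invoke Yoneda: a morphism in $\CC$ is an isomorphism iff it induces bijections on all $\Hom_\CC(T,-)$, so if every $f_T$ is Latin then each $\tau^{d+1}_s\circ\iota$ induces bijections and is hence an isomorphism.

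Next I would note that the structural operation is compatible with this translation. By the definition of $-\circ_i-$ in $\CC$ as $f\circ(\Id_{X^{i-1}}\times g\times\Id_{X^{d-i}})$, applying $\Hom_\CC(T,-)$ and the product isomorphism shows that $(f\circ_i g)_T = f_T\circ_i g_T$, where the right-hand side is the set-theoretic composition in the endomorphism operad of $\Hom_\CC(T,X)$. This is a purely formal naturality check using functoriality of $\Hom_\CC(T,-)$ and that it sends the categorical product projections to the set-theoretic ones. With this identity in hand, the argument closes: given $f\in\CL_\CC(X^d,X)$ and $g\in\CL_\CC(X^e,X)$, for each $T$ we have $f_T,g_T$ Latin over $\Hom_\CC(T,X)$, so by Lemma~\ref{Lem-closed}, $f_T\circ_i g_T=(f\circ_i g)_T$ is Latin over $\Hom_\CC(T,X)$; since this holds for all $T$, the first part of the translation gives $f\circ_i g\in\CL_\CC(X^{d+e-1},X)$. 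The remaining operad axioms (symmetric group equivariance, unit $\Id_X$, associativity of the $-\circ_i-$) transport from $\CE$ verbatim, so $\CL$ is genuinely a sub-operad.

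The main obstacle I anticipate is getting the translation lemma exactly right rather than the closure step itself, which becomes essentially free once the dictionary is established. Specifically, care is needed in handling the empty test object or the case where $\Hom_\CC(T,X)=\emptyset$: Lemma~\ref{Lem-closed} assumes a non-empty set, whereas $\Hom_\CC(T,X)$ could be empty for some $T$. However, a Latin hypercube over the empty set is vacuously Latin (all uniqueness-and-existence conditions quantify over an empty set of arguments), so the set-theoretic lemma holds trivially in that degenerate case and no essential smallness or non-emptiness hypothesis on $X$ is required. I would also make explicit that Remark~\ref{rem1} allows reducing to the single operation $f\circ_d g$ with just the two cases $s\le d-1$ and $s=d$, which streamlines the invocation of Lemma~\ref{Lem-closed} through the $\sym_n$-equivariance already verified on $\CE$.
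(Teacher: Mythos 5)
Your proposal is correct, but it proves Theorem~\ref{thm2} by a genuinely different route than the paper. The paper's proof stays internal to $\CC$: it redoes the argument of Lemma~\ref{Lem-closed} categorically, using Remark~\ref{rem1} to reduce to $i=d$ and the two cases $1=s\leq d-1$ and $s=d$, and then exhibits $\tau^{d+e}_s\circ(\Id_{X^{d+e-1}},f\circ_d g)$ explicitly as a composite of automorphisms of $X^{d+e-1}$ (products of the automorphisms $\tau^d_s\circ(\Id_{X^d},f)$ and $\tau^e_1\circ(\Id_{X^e},g)$ with coordinate permutations). You instead pass to generalised elements: since $\Hom_\CC(T,-)$ preserves products and, by Yoneda, a morphism is an isomorphism iff it induces bijections on all $\Hom_\CC(T,-)$, membership $f\in\CL_\CC(X^d,X)$ is equivalent to $f_T$ being a set-theoretic Latin map over $\Hom_\CC(T,X)$ for every test object $T$; combined with the naturality identity $(f\circ_i g)_T=f_T\circ_i g_T$, the closure statement follows by quoting Lemma~\ref{Lem-closed} once per test object. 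Both directions of your translation lemma are needed and you supply both, and you correctly flag and resolve the one genuine edge case, namely that $\Hom_\CC(T,X)$ may be empty while Lemma~\ref{Lem-closed} is stated for non-empty sets (the Latin condition is vacuous there). What your approach buys is economy and conceptual clarity: the set-theoretic lemma is reused verbatim and the categorical statement is exposed as a formal consequence of it; it also makes your appeal to Remark~\ref{rem1} superfluous, since Lemma~\ref{Lem-closed} already covers all $i$ and $s$ (that sentence in your write-up can simply be dropped). What the paper's approach buys is a construction that never leaves $\CC$: it produces the inverse automorphisms explicitly rather than inferring their existence through the Yoneda embedding, which is in the spirit of the paper's stated aim of rewriting the proof of Theorem~\ref{thm1} so that every step is valid for Cartesian products.
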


\begin{proof}
The proof amounts to rewriting the proof of Theorem \ref{thm1}, including the statement and proof of
Lemma \ref{Lem-closed}, in such a way
that all steps remain valid for the Cartesian products in $\CC$.  In order to keep
this readable, we mention in each step what this corresponds to in the case where
$X$ is a non-empty set (and by considering coordinates, one easily translates this
to statements in $\CC$). 

\medskip
Let $f\in$ $\CL_\CC(X^d,X)$ and $g\in$ $\CL_\CC(X^e,X)$, where
$d$, $e$ are positive integers. That is, the morphisms
$$(\Id_{X^d}, f) : X^d \to X^{d+1}, $$
$$(\Id_{X^e},g) : X^e \to X^{e+1}$$
are Latin hypercubes. 
As in the proof of Theorem \ref{thm1}, the unitality and symmetric group actions
are obvious, and we only need to show, analogously to Lemma \ref{Lem-closed}, 
that  $(\Id_{X^{d+e-1}}, f\circ_i g)$ is a Latin hypercube. That is, we need to show that for
$1\leq s\leq d+e-1$ and $1\leq i\leq d$, 
the composition $\tau^{d+e}_s \circ  (\Id_{X^{d+e-1}}, f\circ_i g)$
is an automorphism of $X^{d+e-1}$. As pointed out in Remark \ref{rem1}, since
we may permute coordinates, it suffices to do this for $i=d$ and  in the two cases where
either $1=s\leq d-1$ or $s=d$.

\medskip
We consider first the case $1=s\leq d-1$, so $d\geq 2$. We need to show that the morphism
$$\tau^{d+e}_1\circ (\Id_{X^{d+e-1}}, f\circ_d g)$$
is an automorphism of $X^{d+e-1}$. If $X$ is a set, then  this automorphism is given by 
the assignment
$$(x_1,x_2,..,x_{d+e-1}) \mapsto (x_2,..,x_{d+e-1}, f(x_1,..,x_{d-1},g(x_d,..,x_{d+e-1}))).$$
First, the morphism $\tau^d_1\circ (\Id_{X^d}, f)$ is an automorphism of $X^d$ because
$(\Id_{X^d},f)$ is a Latin hypercube. We note that if $X$ is a non-empty set, then the morphism 
$\tau^d_1\circ (\Id_{X^d}, f)$ is given by the assignment 
$$(x_1,..,x_d) \mapsto (x_2,..,x_d, f(x_1,..,x_d)).$$
Compose this with the automorphism $\hat\sigma$
induced by the cyclic permutation $\sigma=$ $(1,2,..,d)$ on coordinates. 
The resulting automorphism
$$\hat\sigma\circ\tau^d_1\circ (\Id_{X^d},f)$$
is, for $X$ a set, given by the assignment
$$(x_1,..,x_d) \mapsto (f(x_1,..,x_d),x_2,..,x_d)$$
The Cartesian product  of this automorphism with $-\times \Id_{X^{e-1}}$ yields
an automorphism of $X^{d+e-1}$, which for $X$ a set corresponds to 
$$(x_1,..,x_{d+e-1}) \mapsto (f(x_1,..,x_d),x_2,..,x_{d+e-1}))$$
Again permuting cyclically the coordinates yields an automorphism of $X^{d+e-1}$ 
which we will denote by $\alpha$, which, if
$X$ is a set,  corresponds to
$$(x_1,..,x_{d+e-1}) \mapsto (x_2,..,x_{d+e-1}, f(x_1,..,x_d)).$$
Using the fact that $\tau^e_1\circ (\Id_{X^e}, g)$ is an automorphism of $X^e$,
combined with cyclically permuting coordinates, we obtain an automorphism
$\gamma$ of $X^e$ which corresponds to the assignement
$$(x_d,..,x_{d+e-1}) \mapsto (g(x_d,..,x_{d+e-1}), x_{d+1},..,x_{d+e-1}).$$
Then $\beta = \Id_{X^{d-1}}\times \eta$ is the automorphism of $X^{d+e-1}$
which corresponds to
$$(x_1,..,x_{d+e-1}) \mapsto (x_1,..,x_{d-1}, g(x_d,..,x_{d+e-1}), x_{d+1},..,x_{d+e-1}).$$
Similarly, $\gamma=\Id_{X^{d-1}}\times \eta \times \Id_X$ is an automorphism of 
$X^{d+e-1}$.
Now $\alpha\circ\beta$ is an automorphism of $X^{d+e-1}$ corresponding to
$$(x_1,..,x_{d+e-1}) \mapsto (x_2,..,x_{d-1},g(x_d,..,x_{d+e-1}),x_{d+1},..,x_{d+e-1},
f(x_1,..,x_{d-1}, g(x_d,..,x_{d+e-1}))).$$
Thus the automorphism $\gamma^{-1}\circ\alpha\circ\beta$ of $X^{d+e-1}$
coincides with the morphism $\tau^{d+e}_1\circ (\Id_{X^{d+e-1}}, f\circ_d g)$.
This proves the result in  the case $1=s\leq d-1$.

\medskip
Consider next the case $s=d$. We need to show that the morphism
$\tau^{d+e}_d\circ (\Id_{X^{d+e-1}}, f\circ_d g)$ is an automorphism of $X^{d+e-1}$.
If $X$ is a set, then this automorphism is given by the assignment
$$(x_1,..,x_{d+e-1}) \mapsto   
(x_1,..,x_{d-1},x_{d+1},..,x_{d+e-1}, f(x_1,..,x_{d-1},g(x_d,..,x_{d+e-1}))).$$
As before, by using the automorphism $\tau^e_1\circ (\Id_{X^e}, g)$,
cyclically permuting the  coordinates and then applying $\Id_{X^{d-1}}\times-$
 we obtain anautomorphism $\delta$ of $X^{d+e-1}$, which for $X$ a set
 corresponds to
 $$(x_1,..,x_{d+e-1}) \mapsto (x_1,..,x_{d-1}, g(x_d,..,x_{d+e-1}), x_{d+1},..,x_{d+e-1}).$$
 Similarly, applying $-\times \Id_{X^{e-1}}$ to the automorphism 
 $\tau^d_d\circ (\Id_{X^d}, f)$ yields an automorphism $\epsilon$ of $X^{d+e-1}$,
 which for $X$ a set corresponds to
 $$(x_1,..,x_{d+e-1}) \mapsto (x_1,..,x_{d-1}, f(x_1,..,x_d), x_{d+1},..,x_{d+e-1}).$$
 Thus $\epsilon\circ\delta$ is the automorphism of $X^{d+e-1}$ which for $X$ a set
 corresponds to 
 $$(x_1,...,x_{d+e-1}) \mapsto (x_1,..,x_{d-1},x_{d+1},..,x_{d+e-1}, (f\circ_d g)(x_1,..,x_{d+e-1})),$$
 and this is indeed the automorphism $\tau^{d+e-1}_d\circ (\Id_{X^{d+e-1}}, f\circ_d g)$.
 This proves the second case, and the result follows.
\end{proof}

%%%%%%%%%%%%%%%%%%%%%%%%%%%%%%%%%%%%%%%%%%%%%%%%%
\section{On Latin hypercubes in closed Cartesian monoidal categories}
\label{closedSection}
%%%%%%%%%%%%%%%%%%%%%%%%%%%%%%%%%%%%%%%%%%%%%%%%%

A monoidal category  $\CC$ with unit object $\One$  is closed if $\CC$ has an internal Hom, denoted 
$\bHom$. That is, $\bHom : \CC^\op\times \CC\to \CC$
is a bifunctor such that for any object $X$ in $\CC$
the functor $X\times -$ on $\CC$ is left adjoint to the functor $\bHom(X,-)$. 
This adjunction yields in particular natural bijections
$\Hom_\CC(\One, \bHom(X,Y))\cong$ $\Hom_\CC(X,Y)$ and natural
isomorphisms $\bHom(\One, X)\cong$ $X$;
see Kelly \cite{Kelly82} and \cite{MacMoe} 
for more background material. Following \cite{Kelly05}, 
endomorphism operads  can be defined over objects in certain closed symmetric 
monoidal categories.
For the definition of Latin hypercubes we need in addition that  
$\CC$ is Cartesian monoidal; that is, the monoidal product is a product in the
category $\CC$. In that case the unit object $\One$ is a terminal object in $\CC$.   
In what follows we say that a morphism $\hat\alpha$ between Hom objects in $\CC$ 
lifts a map $\alpha$ if $\alpha$ is the image of 
$\hat\alpha$ under the functor $\Hom_\CC(\One,-)$ modulo canonical identifications.

In order to show that the morphism sets $\CL_\CC(X^d,X)$ lift to internal objects whenever
$\CC$ has pullbacks,  we will need, from 
 \cite[Exercise 5, page 213]{MacMoe}, the fact that automorphism groups of objects lift to
 internal objects.  We have a pullback diagram
$$\xymatrix{\Aut_\CC(X) \ar[rr] \ar[d]_{\delta} & & \{\Id_\One\} \ar[d] \\
\Hom_\CC(X,X)\times \Hom_\CC(X,X) \ar[rr]_{\mu} & & \Hom_\CC(X,X)\times \Hom_\CC(X,X) 
}$$
where $\delta$ sends $\sigma\in\Aut_\CC(X)$ to $(\sigma,\sigma^{-1})$, where
$\mu(\alpha,\beta)=$ $(\beta\circ\alpha, \alpha\circ\beta)$ for any 
$\alpha$, $\beta\in$ $\Hom_\CC(X,X)$, and where the right vertical map
sends $\Id_\One$ to $(\Id_X,\Id_X)$. The lower horizontal map $\mu$ commutes with the
involution on $\Hom_\CC(X,X)\times \Hom_\CC(X,X)$ given by exchanging coordinates.
The map $\mu$ and the right
vertical map lift to maps on internal Hom objects, and hence, if $\CC$ has pullbacks, then
the above diagram lifts to a pullback diagram in $\CC$ of the form
\begin{Statement} \label{bAut} 
$$\xymatrix{
\bAut(X) \ar[rr] \ar[d]_{(\gamma,\gamma')} & & \One = \One\times \One \ar[d]^{\iota} \\
\bHom(X,X)\times \bHom(X,X) \ar[rr]_{\nu} & & \bHom(X,X)\times \bHom(X,X)
}$$
\end{Statement} 
Composing $\iota$ with the canonical involution on $\bHom(X,X)\times\bHom(X,X)$ commutes
with $\nu$, 
does not change $\iota$, while it changes $(\gamma, \gamma')$ to $(\gamma',\gamma)$.
The universal property of pullbacks implies that there is a unique automorphism
$\epsilon$ of the object $\bAut(X)$ of order $2$ with the property that
$(\gamma',\gamma)=$ $(\gamma,\gamma')\circ\epsilon$. This automorphism lifts
the bijection given  by taking inverses in the group $\Aut_\CC(X)$. Since $\iota$ is
trivially a monomorphism, it follows that $(\gamma,\gamma')$ is a monomorphism.
If $\Hom_\CC(\One,-)$ is faithful, hence reflects monomorphism, then both
 $\gamma$ and $\gamma'$ are  monomorphisms, since they lift inclusion maps.
The following result shows that there are internal objects lifting the morphism sets $\CL_\CC(X^d,X)$.

\begin{Theorem} \label{thm3}
Let $\CC$ be a Cartesian closed monoidal category with pullbacks. Let $X$ be an object in $\CC$,
and let $d$ be a  positive integer. 
There is an object $\bCL(X^d,X)$ in $\CC$, determined uniquely up to unique isomorphism,
such that we have a canonical
isomorphism $\Hom_\CC(\One,\bCL(X^d,X))\cong$ $\CL(X^d,X)$, and such that there
is a canonical morphism $\bCL(X^d,X) \to \bHom(X^d,X)$ in $\CC$ which 
lifts the inclusion $\CL_\CC(X^d,X) \to \Hom_\CC(X^d,X)$. If in addition the functor 
$\Hom_\CC(\One,-)$ faithful, then the canonical morphism $\bCL(X^d,X) \to \bHom(X^d,X)$ 
is a monomorphism.
\end{Theorem}

We will need the following characterisation of the morphism sets $\CL_\CC(X^d,X)$
in a category with finite products.

\begin{Lemma} \label{lem-3}
Let $\CC$ be a category with finite products, let $X$ be an object in $\CC$, and let
$d$ be a positive integer. We have a pullback diagram of sets
$$
\xymatrix{\CL_\CC(X^d,X) \ar[d] \ar[rr] & & \prod_{i=1}^d \Aut_\CC(X^d) \ar[d]^\iota \\
\Hom_\CC(X^d,X) \ar[rr]_\gamma & & \prod_{i=1}^d \Hom_\CC(X^d,X^d) }
$$
where $\gamma=$ 
$(\gamma_i : \Hom_\CC(X^d,X)\to \Hom_\CC(X^d,X^d))_{1\leq i\leq d}$ is defined by 
$$\gamma_i(\lambda)=\tau^{d+1}_i \circ (\Id_{X^d},\lambda)$$
for $1\leq i\leq d$ and $\lambda \in \Hom_\CC(X^d, X)$, and where $\iota$ is
the product of $d$ copies of the inclusion $\Aut_\CC(X^d)\to$ $\Hom_\CC(X^d,X^d)$.
\end{Lemma}

\begin{proof}
Let $\lambda\in$ $\Hom_\CC(X^d,X)$.  By definition, we have
$\lambda\in\CL_\CC(X^d, X)$ if and only if $\gamma_i\in$ $\Aut_\CC(X^d)$ for
$1\leq i\leq d$.  This is clearly equivalent  to the assertion that the diagram in the
statement is a pullback diagram.
\end{proof}

\begin{proof}[Proof of Theorem \ref{thm3}]
As described in the diagram \ref{bAut}, appplied with $X^d$ instead of $X$, there is a morphism 
 $\bAut(X^d) \to \bHom(X^d,X^d)$ which lifts the inclusion $\Aut_\CC(X^d)\to$
$\Hom_\CC(X^d,X^d)$.
Both maps $\gamma$ and $\iota$ in the diagram from Lemma \ref{lem-3}  lift to
morphisms $\hat\gamma$ and $\hat\iota$ between the relevant
 internal Hom objects, and hence, by the assumptions on $\CC$,
 there is a pullback diagram  in $\CC$ of the form
 \begin{Statement} \label{LinternalDef}
 $$\xymatrix{\bCL(X^d,X) \ar[d] \ar[rr] & & \prod_{i=1}^d \bAut(X^d) \ar[d]^{\hat\iota} \\
\bHom(X^d,X) \ar[rr]_{\hat\gamma}  & & \prod_{i=1}^d \bHom(X^d,X^d)}$$
\end{Statement}
The functor $\Hom_\CC(\One,-)$ from $\CC$ to the category of sets preserves
pullbacks, hence sends this pullback diagram to a diagram isomorphic to that in
Lemma \ref{lem-3}. It follows in particular that $\Hom_\CC(\One, \bCL(X^d,X))\cong$ 
$\CL_\CC(X^d,X)$. The uniqueness statement follows from the fact that
 pullbacks are unique up to unique isomorphism. If the functor $\Hom_\CC(\One,-)$ is faithful, then
 this functor reflects monomorphisms, whence the last statement.
follows.
\end{proof}

\begin{Theorem} \label{thm4} 
Let $\CC$ be a Cartesian closed monoidal category with pullbacks. 
Suppose that the functor $\Hom_\CC(\One,-)$ is faithful. 
Suppose in addition  that 
for any two objects $Y, $Z$ $ in $\CC$ and any morphism $\zeta :  Z \to \bHom(Y,Y)$, 
if the map $\Hom_\CC(\One,\zeta)$ factors through the inclusion $\Aut_\CC(Y)\to$
$\Hom_\CC(Y,Y)$, then the morphism
$\zeta$ factors through the morphism $\bAut(Y) \to $ $ \bHom(Y,Y)$.
Let $X$ be an object in $\CC$. For any positive integer $d$  the morphism
$\bCL(X^d,X) \to \bHom(X^d,X)$ is a monomorphism, and, with $d$ running over $\N$, these
monomorphisms form a suboperad of the internal endomorphism operad of $X$  in $\CC$.
\end{Theorem}

\begin{proof} 
Note that since we assume $\Hom_\CC(\One,-)$ to be faithful (hence
reflecting monomorphisms), it follows that the morphism $\bCL(X^d,X)\to$ $\bHom(X^d,X)$
from Theorem \ref{thm3}  is a monomorphism, for any positive integer $d$.
Furthermore, as in the proofs of Theorems \ref{thm1}, \ref{thm2}, 
showing the unitality and  compatibility with symmetric group actions is
straightforward. What remains to be proved is that the maps $- \circ_i -$ of the 
endomorphism operad induce maps on the subobjects $\bCL(X^d,X)$ of the
internal Hom objects $\bHom(X^d,X)$. Let $d$, $e$, $i$  be positive integers such that
$1\leq i\leq d$. The map
$$ - \circ_i - : \Hom_\CC(X^d,X) \times \Hom_\CC(X^e,X) \to \Hom_\CC(X^{d+e-1},X)$$
sends $(f,g)$ to $f\circ (\Id_{X^{i-1}} \times g \times \Id_{X^{d-i}})$. Since this involves
composition and products only, this map lifts to a map of internal Hom objects
$$ \bHom(X^d,X) \times \bHom(X^e,X) \to \bHom(X^{d+e-1}, X).$$  
At the level of morphism
sets it follows from Theorem \ref{thm2} that we have a commutative diagram
\begin{Statement} \label{lift} 
$$ \xymatrix{
\Hom_\CC(X^d,X) \times \Hom_\CC(X^e,X) \ar[rr]^{-\circ_i - } & & \Hom_\CC(X^{d+e-1},X) \\
\CL_\CC(X^d,X) \times \CL_\CC(X^e,X) \ar[u] \ar[rr] & & \CL_\CC(X^{d+e-1},X) \ar[u] 
},$$
\end{Statement} 
where the vertical maps are inclusions.
We need to show that this diagram lifts to the internal Hom objects and relevant subobjects. We note that
the vertical maps in the diagram \ref{lift} lift by Theorem \ref{thm3}, and the top horizontal map lifts
by the discussion preceding the diagram \ref{lift}. What we need to show is that the bottom horizontal
map in diagram \ref{lift} lifts as well.

Since $\bCL(X^{d+e-1},X)$ is defined via a pullback diagram \ref{LinternalDef} (with $d+e-1$
instead of $d$), we need to show that there is a commutative  diagram of the form
\begin{Statement} \label{thm4-diagram}
$$\xymatrix{\bHom(X^d,X) \times \bHom(X^e,X) \ar[rr]^{-\circ_i-} & & \bHom(X^{d+e-1},X) \ar[r]
&  \prod_{i=1}^{d+e-1}  \bHom(X^d,X^d) \\
\bCL(X^d,X) \times \bCL(X^e,X) \ar[u] \ar[rrr] & & &  \prod_{i=1}^{d+e-1}  \bAut(X^d) \ar[u] 
}$$
\end{Statement} 
Combining Lemma \ref{lem-3} (with $d+e-1$ instead of $d$) and diagram \ref{lift} yields a
commutative diagram
$$\xymatrix{\Hom_\CC(X^d,X) \times \Hom_\CC(X^e,X) \ar[rr]^{-\circ_i-} & & \Hom_\CC(X^{d+e-1},X) \ar[r]
&  \prod_{i=1}^{d+e-1}  \Hom_\CC(X^d,X^d) \\
\CL_\CC(X^d,X) \times \CL_\CC(X^e,X) \ar[u] \ar[rrr] & & &  \prod_{i=1}^{d+e-1}  \Aut_\CC(X^d) \ar[u] 
}$$
The top horizontal and two vertical maps in this diagram lift to maps as in the diagram \ref{thm4-diagram}.
The hypothesis on lifting maps through morphisms of the form $\bAut(Y)\to$ $\bHom(Y,Y)$
applied to the  $d+e-1$ components on the right side of the diagram \ref{thm4-diagram} shows the existence
of the lower horizontal map making the diagram \ref{thm4-diagram} commutative. The uniqueness of such a map
follows from the fact that the right vertical map is a monomorphism, where we use that the
functor $\Hom_\CC(\One,-)$ is faithful.
\end{proof}

\begin{Remark} \label{Rem-thm4} 
We do not
know whether the hypothesis on lifting morphisms $Z\to \bHom(Y,Y)$ through $\bAut(Y)\to$ $\bHom(Y,Y)$
is indeed needed for Theorem \ref{thm4} to hold. 
This hypothesis holds in the category of compactly generated topological spaces. 
It is easy to see that this hypothesis holds if $\bAut(Y) \to$ $\bHom(Y,Y)$  is a regular
monomorphism (these are  monomorphisms which are an equaliser of a pair of parallel morphisms),
assuming as before  that $\Hom_\CC(\One,-)$ is faithful.
\end{Remark}

%% 3 %%%%%%%%%%%%%%%%%%%%%%%%%%%%%%%%%%%%%%%%%%%%%%%%
\section{Latin hypercubes in terms of pullback diagrams and further remarks} \label{pullbackSection} 
%%%%%%%%%%%%%%%%%%%%%%%%%%%%%%%%%%%%%%%%%%%%%%%%%%%%

Definition \ref{hypercubeDef} describes  Latin hypercubes 
of dimension $d$ over a non-empty set $X$ as subsets of $X^{d+1}$ instead as the 
graph of a  function $X^d\to X$.  
We  describe the composition maps $-\circ_i -$  in terms  of these subsets as pullbacks.

\begin{Proposition} \label{iProp}
Let $X$ be a non-empty set, and let $d$, $e$, $i$  be positive integers such 
that $1\leq i\leq d$. 
Let $L\subset X^{d+1}$ and  $M\subseteq X^{e+1}$ be Latin hypercubes over $X$
of dimension $d$ and $e$, respectively.
Let $f : X^d\to X$ and $g : X^e \to X$ be the maps whose graphs are $L$ and $M$,
respectively. Denote by $L\circ_i M\subseteq X^{d+e}$  the Latin hypercube over $X$ 
of dimension $d+e-1$ which is the graph of the map $f\circ_i g : X^{d+e-1}\to X$.

An element $(x_1,x_2,..,x_{d+e})\in$ $X^{d+e}$ belongs to $L\circ_i~M$ if and only if 
there is an element $z\in X$ such that $(x_1,..,x_{i-1},z,x_{e+i},..,x_{d+e})\in$ $L$ and 
such that $(x_i,..,x_{e+i-1},z)\in$ $M$.
Then $z$ is uniquely determined by the elements $x_1$, $x_2$,..,$x_{d+e}$.
Equivalently, we have a pullback diagram of the form
$$\xymatrix{ L\circ_i M \ar[rr]^\beta \ar[d]_\alpha  & &  M \ar[d]^{\mu_{e+1}} \\
L \ar[rr]_{\lambda_i} & & X}$$
where $\lambda_i$ is the restriction to $L$ of the canonical projection 
$\pi^{d+1}_i : X^{d+1}\to X$ onto the coordinate $i$, where $\mu_{e+1}$ is the restriction 
to $M$ of the canonical projection $\pi^{e+1}_{e+1} : X^{e+1}\to X$ onto the coordinate 
$e+1$, and where 
$$\alpha(x_1,x_2,..,x_{d+e}) = (x_1,..,x_{i-1},z,x_{e+i},..,x_{d+e}),$$
$$\beta(x_1,x_2,..,x_{d+e}) = (x_i,..,x_{i+e-1},z).$$
\end{Proposition}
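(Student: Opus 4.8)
The plan is to prove the element-wise characterisation first, and then to identify $L\circ_i M$ together with the morphisms $\alpha$ and $\beta$ with the set-theoretic pullback of $\lambda_i$ and $\mu_{e+1}$. First I would unwind the definition of $L\circ_i M$ as the graph of $f\circ_i g$: an element $(x_1,x_2,..,x_{d+e})$ lies in $L\circ_i M$ precisely when $x_{d+e}=(f\circ_i g)(x_1,..,x_{d+e-1})$, which by the definition of the structural map $-\circ_i-$ equals $f(x_1,..,x_{i-1},g(x_i,..,x_{i+e-1}),x_{i+e},..,x_{d+e-1})$. Setting $z=g(x_i,..,x_{i+e-1})$ then gives at once both asserted membership conditions; conversely, if some $z\in X$ satisfies them, then $(x_i,..,x_{i+e-1},z)\in M$ forces $z=g(x_i,..,x_{i+e-1})$ because $M$ is the graph of $g$. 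This simultaneously establishes the equivalence and the uniqueness of $z$.

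Next I would check that the square is well-defined and commutes. The characterisation above says exactly that $\alpha$ takes values in $L$ and $\beta$ takes values in $M$. For commutativity I would evaluate both composites: $\lambda_i(\alpha(x_1,..,x_{d+e}))$ reads off coordinate $i$ of the tuple $(x_1,..,x_{i-1},z,x_{e+i},..,x_{d+e})$, which is $z$, while $\mu_{e+1}(\beta(x_1,..,x_{d+e}))$ reads off the last coordinate of $(x_i,..,x_{i+e-1},z)$, which is also $z$. Hence $\lambda_i\circ\alpha=\mu_{e+1}\circ\beta$, so the diagram commutes.

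Finally, to establish the universal property, I would exhibit an explicit inverse to the comparison map $(\alpha,\beta)$ into the set-theoretic pullback $P=\{(\ell,m)\in L\times M \mid \lambda_i(\ell)=\mu_{e+1}(m)\}$. Given $(\ell,m)\in P$ with $\ell=(y_1,..,y_{d+1})\in L$ and $m=(w_1,..,w_{e+1})\in M$, the matching condition reads $y_i=w_{e+1}=:z$, and I would glue the two tuples along this shared value to form $(y_1,..,y_{i-1},w_1,..,w_e,y_{i+1},..,y_{d+1})\in X^{d+e}$. A short computation using $g(w_1,..,w_e)=w_{e+1}=z=y_i$ together with $f(y_1,..,y_d)=y_{d+1}$ shows that this element lies in $L\circ_i M$, and that the resulting map is a two-sided inverse to $(\alpha,\beta)$. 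Thus $L\circ_i M$ equipped with $\alpha$ and $\beta$ is the pullback of $\lambda_i$ and $\mu_{e+1}$.

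The main obstacle will be purely the index bookkeeping: correctly tracking which coordinates of the glued $(d+e)$-tuple originate from $L$ and which from $M$, and ensuring that the shared coordinate $z=y_i=w_{e+1}$ is used consistently in every calculation. No genuine mathematical difficulty arises beyond the observation that gluing the two graphs along the matched coordinate realises the universal property.
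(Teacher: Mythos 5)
Your proposal is correct and follows essentially the same route as the paper: unwind the definition of $L\circ_i M$ as the graph of $f\circ_i g$, identify $z=g(x_i,..,x_{i+e-1})$ as the unique element satisfying both membership conditions, and deduce the pullback property from that uniqueness. The only difference is one of detail: where the paper compresses the pullback claim into a single sentence (``the second statement follows from the fact that $z$ is uniquely determined''), you explicitly construct the gluing map inverse to the comparison map $(\alpha,\beta)$ into the set-theoretic pullback, which is exactly the verification the paper leaves to the reader.
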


\begin{proof}
Note that  $L\circ_i M$ is indeed a Latin hypercube  by Theorem \ref{thm1}.
We have 
$$(x_1,x_2,..,x_{d+e})\in L\circ_i M$$ 
if and only if
$$x_{d+e} = (f\circ_i g)(x_1,x_2,..,x_{d+e-1}) = 
f(x_1,..,x_{i-1}, g(x_i,..,x_{i+e-1}),x_{i+e},..,x_{d+e-1}).$$
Since 
$z=g(x_i,..,x_{e+i-1})$
is the unique element in $X$ such that $(x_i,..,x_{i+e-1},z)\in M$ and the unique
element such that $(x_1,..,x_{i-1},z,x_{i+e},..,x_{d+e})\in L$, the first statement follows.
The second statement follows from the fact that $z$ is uniquely determined by the 
coordinates $x_i$, $1\leq i \leq d+e$ 
\end{proof}

 It is well-known that for a  Latin hypercube of dimension $d\geq 2$ 
over a non-empty set $X$, fixing one of the coordinates in $X^{d+1}$ yields
a Latin hypercube of dimension $d-1$ (this was implicitly used in the proof of
Theorem \ref{thm1}). This can be  extended to Cartesian monoidal categories
with pullbacks. We will need the following Lemma to show this.

\begin{Lemma} \label{monoidalLemma1}
Let $\CC$ be a Cartesian monoidal category. Let $X$ be an object in $\CC$ and
let $d$, $s$  be  positive integers such that $s\leq d$ and $d\geq 2$. 
Let $c :\One\to X$ be
a morphism in $\CC$. Denote by $\Id_{X^d}\times c : X^d\to X^{d+1}$ the 
unique morphism satisfying $\tau^{d+1}_{d+1} \circ (\Id_{X^d}\times c) =$ $\Id_{X^d}$
and $\pi^{d+1}_{d+1} \circ (\Id_{X^d}\times c) =$ $t\times c$, where $t$ is the
unique morphism $X^d\to\One$, and where we identify $X^d=X^d\times\One$ and
$X=\One\times X$.
The diagram
$$\xymatrix{ X^d \ar[rr]^{\tau^d_s} \ar[d]_{\Id_{X^d}\times c} & & 
X^{d-1} \ar[d]^{\Id_{X^{d-1}}\times c} \\
X^{d+1} \ar[rr]_{\tau^{d+1}_s} & & X^d} $$
is a pullback  diagram.
\end{Lemma}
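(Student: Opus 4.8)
The plan is to verify the universal property of the pullback directly, exploiting that every object in the square is a finite power of $X$, so that a morphism into it is determined by its components, i.e.\ by its composites with the canonical projections. Throughout I would use only the defining relations of the discard morphisms, namely $\pi^{d+1}_i = \pi^d_i\circ\tau^{d+1}_s$ for $i\leq s-1$ and $\pi^{d+1}_i = \pi^d_{i-1}\circ\tau^{d+1}_s$ for $i\geq s+1$ (and the analogous relations for $\tau^d_s$), together with the two identities characterising $\Id_{X^d}\times c$: namely $\tau^{d+1}_{d+1}\circ(\Id_{X^d}\times c)=\Id_{X^d}$ and $\pi^{d+1}_{d+1}\circ(\Id_{X^d}\times c)=c\circ t$, where $t:X^d\to\One$ is the unique morphism. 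As a preliminary step I would confirm that the square commutes by comparing $\pi^d_k\circ(\Id_{X^{d-1}}\times c)\circ\tau^d_s$ with $\pi^d_k\circ\tau^{d+1}_s\circ(\Id_{X^d}\times c)$ for $1\leq k\leq d$; the hypothesis $s\leq d$ ensures that the discarded coordinate $s$ is disjoint from the final coordinate $d+1$ carrying the constant $c$, so the two composites agree coordinatewise.

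For the universal property I would start from an arbitrary test cone: an object $Y$ together with morphisms $q:Y\to X^{d+1}$ and $p:Y\to X^{d-1}$ satisfying $\tau^{d+1}_s\circ q=(\Id_{X^{d-1}}\times c)\circ p$. The key observation is that the mediating morphism is forced to be $u:=\tau^{d+1}_{d+1}\circ q:Y\to X^d$, i.e.\ the morphism obtained by forgetting the last coordinate of $q$. Indeed, any $u'$ with $(\Id_{X^d}\times c)\circ u'=q$ must satisfy $u'=\tau^{d+1}_{d+1}\circ(\Id_{X^d}\times c)\circ u'=\tau^{d+1}_{d+1}\circ q=u$, which yields uniqueness immediately.

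It then remains to check that this $u$ is a morphism of cones, i.e.\ that $(\Id_{X^d}\times c)\circ u=q$ and $\tau^d_s\circ u=p$. For the first, since a morphism into $X^{d+1}=X^d\times X$ is determined by its composites with $\tau^{d+1}_{d+1}$ and $\pi^{d+1}_{d+1}$, I would verify these separately: composing with $\tau^{d+1}_{d+1}$ returns $u=\tau^{d+1}_{d+1}\circ q$ tautologically, while composing with $\pi^{d+1}_{d+1}$ reduces to the identity $\pi^{d+1}_{d+1}\circ q=c\circ t_Y$ (with $t_Y:Y\to\One$ the unique morphism), which I would extract from the cone condition by composing it with $\pi^d_d$ and using $\pi^d_d\circ\tau^{d+1}_s=\pi^{d+1}_{d+1}$ (valid precisely because $s\leq d$). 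For the second equation I would compare the components $\pi^{d-1}_k\circ(-)$ for $1\leq k\leq d-1$, using $\pi^{d-1}_k\circ\tau^d_s=\pi^d_k$ or $\pi^d_{k+1}$ according to whether $k<s$ or $k\geq s$, and matching these against the components of $p$ read off from the cone condition via the same shift relations.

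I expect no deep obstacle: the whole argument is a bookkeeping exercise in the indices, and the only point requiring care is the consistent re-indexing across the two discard maps $\tau^{d+1}_s$ and $\tau^d_s$, which the hypothesis $s\leq d$ (so that coordinate $s$ never collides with the constant final coordinate) makes coherent. The hypothesis $d\geq 2$ simply guarantees that $X^{d-1}$ is a genuine product, so that the component description is the expected one.
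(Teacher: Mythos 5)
Your proof is correct, and its core --- a direct verification of the pullback universal property, using that morphisms into finite powers of $X$ are determined by their components --- is the same as the paper's. The differences are worth recording. The paper first reduces to the single case $s=d$ by permuting coordinates, then exhibits the mediating morphism for a test cone $(u,v)$ as the pair $w=(v,u_d)$ with $u_d=\pi^{d+1}_d\circ u$, and disposes of both existence and uniqueness by ``checking on coordinates''. You keep $s\leq d$ general, which costs extra index bookkeeping across the two discard maps $\tau^{d+1}_s$ and $\tau^d_s$ but avoids the permutation reduction; and you take the mediating morphism to be $\tau^{d+1}_{d+1}\circ q$, so that uniqueness becomes a one-line consequence of the retraction identity $\tau^{d+1}_{d+1}\circ(\Id_{X^d}\times c)=\Id_{X^d}$ rather than a coordinate comparison --- a genuinely cleaner point. (The two descriptions of the mediating morphism agree, since the cone condition identifies the relevant components of $\tau^{d+1}_{d+1}\circ q$ with those of the cone leg into $X^{d-1}$.) Your remaining verifications are sound: the identity $\pi^d_d\circ\tau^{d+1}_s=\pi^{d+1}_{d+1}$, which you use to extract $\pi^{d+1}_{d+1}\circ q=c\circ t_Y$ from the cone condition, is exactly the defining relation of $\tau^{d+1}_s$ at index $d+1\geq s+1$, valid precisely because $s\leq d$, and the componentwise check of $\tau^d_s\circ u=p$ goes through with the shift relations as you state.
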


\begin{proof} 
By permuting the coordinates it suffices to show this for $s=d$. 
Let $Z$ be an object in $\CC$, and let $u : Z\to X^{d+1}$ and
$v : Z\to X^{d-1}$ be morphisms such that 
$$\tau^{d+1}_d\circ u= (\Id_{X^{d-1}}\circ c)\circ v.$$
We need to show that there is a unique morphism $w : Z\to X^d$ satisfying
$u=(\Id_{X^d}\times c)\circ w$ and $v = \tau^d_d\circ w$. By checking on
coordinates one sees that $w = (v,u_d) : Z\to X^{d-1}\times X=X^d$ is
the unique morphism with this property, where as before 
$u_d=$ $\pi^{d+1}_d\circ u$.
\end{proof}

\begin{Proposition} \label{monoidalProp2}
Let $\CC$ be a Cartesian monoidal category with  pullbacks.
Let $d$ be an integer such that $d\geq 2$.  Let $\lambda : L\to X^{d+1}$ be
a morphism in $\CC$. 
Suppose that the morphism $\lambda : L \to X^{d+1}$ is a Latin hypercube.
Then the morphism $\tau^{d+1}_{d+1}\circ\lambda : L\to X^d$ is an isomorphism, and
for every morphism $c : \One \to X$ and every pullback diagram of the form
$$\xymatrix{L_c \ar[rr]^{\lambda_c} \ar[d] & & X^d \ar[d]^{(\Id_{X^d})\times c}\\
L \ar[rr]_\lambda & & X^{d+1}}$$
the morphism $\lambda_c : L_c \to X^d$ is a Latin hypercube.
\end{Proposition}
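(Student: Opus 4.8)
The first assertion is immediate from Definition \ref{hypercubeDef}: since $\lambda$ is a Latin hypercube, $\tau^{d+1}_s\circ\lambda$ is an isomorphism for every $s$ with $1\leq s\leq d+1$, and in particular for $s=d+1$. For the second assertion I must show that $\tau^d_s\circ\lambda_c : L_c\to X^{d-1}$ is an isomorphism for every $s$ with $1\leq s\leq d$, as this is precisely the condition for $\lambda_c : L_c\to X^d$ to be a Latin hypercube of dimension $d-1$.

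The plan is to exhibit each morphism $\tau^d_s\circ\lambda_c$ as a pullback of the isomorphism $\tau^{d+1}_s\circ\lambda$, and then to use the fact that a pullback of an isomorphism along any morphism is again an isomorphism. First I would fix $s$ with $1\leq s\leq d$ and place the given defining pullback square for $L_c$ immediately to the left of the pullback square supplied by Lemma \ref{monoidalLemma1} for this same $s$. These two squares share the vertical edge $\Id_{X^d}\times c : X^d\to X^{d+1}$ (it is the right-hand edge of the $L_c$-square and the left-hand edge of the Lemma \ref{monoidalLemma1} square), so they paste into a rectangle
$$\xymatrix{L_c \ar[rr]^{\tau^d_s\circ\lambda_c} \ar[d] & & X^{d-1} \ar[d]^{\Id_{X^{d-1}}\times c} \\ L \ar[rr]_{\tau^{d+1}_s\circ\lambda} & & X^d}$$
whose top and bottom edges are the composites $\tau^d_s\circ\lambda_c$ and $\tau^{d+1}_s\circ\lambda$ respectively.

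By the pasting law for pullbacks, since both the defining square for $L_c$ and the square of Lemma \ref{monoidalLemma1} are pullbacks, the outer rectangle is a pullback. Its bottom edge $\tau^{d+1}_s\circ\lambda$ is an isomorphism by the Latin hypercube hypothesis on $\lambda$ (using $s\leq d\leq d+1$), and a pullback of an isomorphism is an isomorphism; hence the top edge $\tau^d_s\circ\lambda_c$ is an isomorphism. As $s$ ranges over $1\leq s\leq d$ this establishes that $\lambda_c$ is a Latin hypercube of dimension $d-1$. Since pullbacks are unique up to canonical isomorphism and the Latin hypercube property is preserved under precomposition with an isomorphism (cf.\ the discussion after Definition \ref{hypercubeDef}), the conclusion does not depend on the chosen pullback $L_c$.

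The main point to check carefully is that the two squares genuinely paste, that is, that the edge $\Id_{X^d}\times c$ occurring on the right of the $L_c$-square is literally the edge appearing on the left of the Lemma \ref{monoidalLemma1} square, and that the resulting bottom composite is exactly $\tau^{d+1}_s\circ\lambda$ and not some permuted variant. This is where the compatibility of $\tau^{d+1}_s$ with $\Id_{X^d}\times c$ for $s\leq d$, which is built into the commutativity asserted in Lemma \ref{monoidalLemma1}, is essential; everything else is a formal consequence of the pasting law and of the existence of finite pullbacks in $\CC$.
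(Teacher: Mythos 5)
Your proof is correct and follows essentially the same route as the paper's own argument: both paste the defining pullback square for $L_c$ with the pullback square of Lemma \ref{monoidalLemma1} (for each $s$ with $1\leq s\leq d$), invoke the pasting law to conclude the outer rectangle is a pullback, and then use that pullbacks preserve isomorphisms to transfer the isomorphism $\tau^{d+1}_s\circ\lambda$ to $\tau^d_s\circ\lambda_c$. The only difference is cosmetic: you add a closing remark on independence of the choice of pullback, which the paper omits.
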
 

\begin{proof} 
The morphism $\tau^{d+1}_{d+1}\circ\lambda$ is an isomorphism as part of the
definition of a Latin hypercube.
Let $c : \One \to X$ be a morphism, and let 
 $s$ be an integer such that $1\leq s\leq d$. Consider a pullback diagram as in the statement.
 We need to show that $\tau^d_s\circ\lambda_c : L_c \to X^{d-1}$ is an isomorphism. 
Since $s\leq d$ and $d\geq 2$ we can
complete the diagram in the statement to a commutative diagram
$$\xymatrix{L_c \ar[rr]^{\lambda_c} \ar[d] & & 
X^d \ar[rr]^{\tau^d_s} \ar[d]^{\Id_{X^d}\times c} & & X^{d-1} \ar[d]^{\Id_{X^{d-1}}\times c}\\
L \ar[rr]_\lambda & & X^{d+1} \ar[rr]_{\tau^{d+1}_s} & & X^d}$$
The right square is a pullback diagram by Lemma \ref{monoidalLemma1}. Since the left square
is a pullback diagram, the pasting law for pullbacks implies that the outer rectangle
is a pullback diagram as well. Since the lower horizontal morphism in the outer
rectangle is an isomorphism, so is the upper horizontal morphism (we use here that
pullbacks preserve isomorphisms). 
\end{proof}

We conclude this note with some further remarks.

\begin{Remark}
The notion of transversals can be adapted to the situation of Definition 
\ref{hypercubeDef} as follows.
Given an object $X$ in a category with finite products and a positive integer $d$, 
a {\em transversal in} $X^{d+1}$ is a morphism $\sigma : X\to X^{d+1}$ such that
$\sigma_i=\pi^{d+1}_i\circ\sigma$ is an automorphism of $X$, for $1\leq i\leq d+1$.
The morphism $\sigma$ is obviously a monomorphism. 
We say that such a transversal is {\em contained in a Latin hypercube} 
$\lambda : L\to X^{d+1}$ if there is a morphism $\iota : X\to L$ such that
$\lambda\circ\iota=$ $\sigma$. In that case, $\iota$ is necessarily a monomorphism
as well.  If $X$ is a non-empty set, then a transversal $\sigma$
can be identified with the subset $\sigma(X)$ of $X^{d+1}$. If $X$ is finite and has
an abelian group structure, and if $T\subseteq X^{d+1}$ is a transversal, then
we have a version of the $\Delta$-Lemma \cite[Lemma 2.1]{Wan} as follows.
Denote by $\alpha : X^{d+1}\to X$ the alternating sum map
$\alpha(x_1,x_2,..,x_{d+1})=\sum_{i=1}^{d+1} (-1)^{i-1}x_i$ and by $t$ the
sum of all involutions in $X$. Since the sum of all elements in $X$ is equal
to the sum $t$ of all involutions in $X$, an elementary calculation shows that
$$\sum_{x \in T} \ \ \alpha(x) = 
\begin{cases}
0 &  \text{if}\ d \ \text{is odd} \\[3pt]  %%% \\[8pt]  increases distance between two lines
t &  \text{if}\ d\ \text{is even.}
\end{cases} $$
If $X=\Z/n\Z$ for some positive integer $n$, then $t=0$ if $n$ is odd, and $t=$ $\frac{n}{2} + n\Z$
if $n$ is even. 
\end{Remark}

\begin{Remark}
Adapting another well-known notion, the {\em graph of a Latin hypercube}
$\lambda : L\to X^{d+1}$ of dimension $d$ over an object $X$ in a Cartesian
monoidal category $\CC$ is the graph with vertex set $\Hom_\CC(\One,L)$,
with an edge  between two morphisms $\eta, \theta : \One\to L$ if there exists
an index $i$ such that $1\leq i\leq d+1$ and such that
$\pi^{d+1}_i\circ\lambda\circ \eta = \pi^{d+1}_i\circ\lambda\circ\theta.$
In the case where $X$ is a non-empty set and $L$ a subset of $X^{d+1}$
this is equivalent to stating that two elements in $L$
are connected by an edge if and only if they have a common coordinate. 
(Being a Latin
hypercube implies that if two elements of $L$ have two common coordinates, then
these two elements are equal.)
\end{Remark}

\begin{Remark}
Latin hypercubes of a fixed dimension $d$ over objects in a category $\CC$ with finite
products form themselves  a category. Let $X$, $Y$ be objects in $\CC$. Let
$\lambda : X^d\to X^{d+1}$ and $\mu : Y^d \to Y^{d+1}$ be Latin hypercubes.
A morphism from $\mu$ to $\lambda$ is a morphism $\iota : Y\to X$ such that
$\lambda\circ \iota^{\times d} =$ $\iota^{\times(d+1)}\circ\mu$. Here $\iota^{\times d} :
Y^d\to X^d$ is the morphism obtained by taking $d$ times the product of $\iota$;
similarly for $d+1$. Note though that the notion of isomorphism in this category is
different from the notion of isomorphism considered in the Introduction, 
where we regard Latin hypercubes of dimension $d$ over $X$   as objects in the 
 under-category of $X^{d+1}$. For a Latin hypercube
$\lambda : X^d\to X^{d+1}$ of the form $\lambda = (\Id_{X^d}, f)$ for some
morphism $f : X^d\to X$, one checks easily that 
an automorphism of this Latin hypercube in the category
defined here  is an automorphism $\iota\in\Aut_\CC(X)$ satisfying 
 $\iota\circ f=$ $f\circ \iota^{\times d}$.
\end{Remark}

\bigskip\noindent
{\em Acknowledgement.} The present paper has been partially supported by
EPSRC grant EP/X035328/1.

%%%%%%%%%%%%%%%%%%%%%%%%%%%%%%%%%%%%%%%%%%%%%%%%%%%%%%%%%%%%%%%%%%%%%%%%%%%%

\end{document}